\newcommand{\be}{\begin{equation}}
\newcommand{\ee}{\end{equation}}
\newcommand{\beq}{\begin{eqnarray}}
\newcommand{\eeq}{\end{eqnarray}}
\def\begeq{\begin{equation}}
\def\endeq{\end{equation}}
\def\odot{\setbox0=\hbox{$\bigcirc$}\relax \mathbin {\hbox
to0pt{\raise.5pt\hbox to\wd0{\hfil $\wedge$\hfil}\hss}\box0 }}
\numberwithin{equation} {section}
\numberwithin{equation}{section}
\newtheorem{theorem}{\bf Theorem}[section]
\newtheorem{definition}[theorem]{\bf Definition}
\newtheorem{lemma}[theorem]{\bf Lemma}
\newtheorem{corollary}[theorem]{\bf Corollary}
\begin{document}

\title[A Rigidity theorem
for parabolic 2-Hessian equations]
{A Rigidity theorem
for parabolic 2-Hessian equations}

\author{Yan He, Cen Pan and Ni Xiang$^\ast$}
\address{Faculty of Mathematics and Statistics, Hubei Key Laboratory of Applied Mathematics, Hubei University,  Wuhan 430062, P.R. China} \email{helenaig@hotmail.com, pancen960213@163.com, nixiang@hubu.edu.cn}

\thanks{This research was supported by funds from Hubei Provincial Department of Education Key Projects D20171004, D20181003. }

\thanks{$\ast$ the corresponding author}

\date{}
\begin{abstract}
In this paper, we consider the entire solutions to the parabolic $2$-Hessian equations of the form $-u_t\sigma_2(D^2 u)=1$ in $\mathbb{R}^n\times (-\infty,0]$. We prove some rigidity theorems
for the parabolic $2$-Hessian equations in $\mathbb{R}^n\times (-\infty,0]$ by establishing Pogorelov type estimates for $2$-convex-monotone solutions of the parabolic $2$-Hessian equations.

\end{abstract}

\maketitle {\it \small{{\bf Keywords}: Rigidity theorem,
parabolic 2-Hessian equation, 2-convex-monotone solution.}

{{\bf MSC}: Primary 35J60, Secondary
35B45.}
}

\section{Introduction}

Since Bernstein proved that an entire, two dimensional,
minimal graph must be a hyperplane, the Bernstein problem has been a core problem in the study of minimal submanifolds. Analytically speaking, an entire minimal graph in $R^{n+1}$ is given by an entire solution, $u(x_1,\cdots, x_n)\in C^2(R^n)$, of the following minimal equation:
\begin{equation*}
\sum _ {i=1} ^n D_i (\frac{D_iu}{\sqrt{1+|Du|^2}})=0.
\end{equation*}
The Bernstein problem asks whether an entire solution of the above equation is necessarily a linear function.
After that many problems on the classification of the entire solutions to partial differential equations have been extensively studied.

In this paper, we focus on some results concerning the rigidity theory for fully nonlinear equations. For the $k$-Hessian equations,
\begin{eqnarray}\label{sg_k}
\sigma_ {k} (D^2 u(x)) = 1.
\end{eqnarray}
Let $\sigma_k(\lambda)$ be the
$k$-th elementary symmetric function of $\lambda \in \mathbb{R}^n$.
Then $\sigma_ {k} (D^2 u(x))=\sigma_ {k} (\lambda[D^2 u])$, where
$\lambda[D^2 u]$ are the eigenvalues of the
Hessian matrix, $D^2 u$, of a function $u$ defined in $\mathbb{R}^n$.
Alternatively, it can be written as the sum of the $k \times k$
principal minors of $D^ 2 u$.

We introduce the class of functions
and domains to ensure the ellipticity of \eqref{sg_k}.
\begin{definition}
A function $u \in C^2(\mathbb{R}^n)$ is called $k$-convex
if $\lambda [D^2 u]=(\lambda_1[D^2 u], ..., \lambda_n[D^2 u])$
 belongs to $\Gamma_k$ for all $x \in \mathbb{R}^n$, where $\Gamma_k$ is the Garding's cone
\begin{eqnarray*}\label{cone}
\Gamma_{k}=\{\lambda \in \mathbb{R} ^n: \sigma_{j}(\lambda)>0, \forall 1\leq j \leq k\}.
\end{eqnarray*}
\end{definition}

Then we list some results concerning the rigidity theorems
for the entire solutions of \eqref{sg_k}. For $k = 1$, \eqref{sg_k} is a linear
equation.  Its entire convex solution must be a quadratic polynomial.
For $k = n$,  the Monge-Amp\`ere equation, a well-known theorem due to
J$\ddot{o}$rgens \cite{Jo} ($n=2$),  Calabi \cite{Ca} ($n = 3, 4, 5$) and Pogorelov \cite{Po1} \cite{Po2}
($n\geq 2$) asserts that that any entire strictly convex solution must be a
quadratic polynomial.
In 2003, Caffarelli and Li, \cite{CL} extended the theorem of J$\ddot{o}$rgens, Calabi and Pogorelov based on the theory of Monge-Amp\`ere equations \cite{Ca1,Ca2}. Moreover, Jian and Wang \cite{JW} obtained Bernstein type result for a certain Monge-Amp\`ere equation in the half space $R^n_+$.

For $k=2$, Chang and Yuan \cite{CY} obtained the rigidity for the entire convex solutions of the equation \eqref{sg_k} if the lower bound holds
\begin{eqnarray*}
D^2 u(x)\geq \bigg[\delta-\sqrt{\frac{2}{n(n-1)}}\bigg] I,
\end{eqnarray*}
for any $\delta>0$. Especially, $n=3$ and $k=2$ in \cite{Yu} by a different transformation and the geometric measure theory, rigidity theorem hold under semiconvexity assumption $D^2u\geq -KI$.
For general $k$, Bao, Chen, Guan and Ji \cite{BC} proved the Bernstein type theorem for strictly convex entire solutions of \eqref{sg_k},
satisfying a quadratic
growth are quadratic polynomials. Here the quadratic growth is defined as follows,
\begin{definition}
A function $u: \mathbb{R}^n\rightarrow \mathbb{R}$ satisfies the quadratic growth if there are
some positive constants $b, c$ and sufficiently large $R$, such that,
\begin{eqnarray}\label{gro}
u(x)\geq b|x|^2-c, \quad \mbox{for} \quad |x|\geq R.
\end{eqnarray}
\end{definition}
Recently, the strictly convex assumption
can be reduced to $(k+1)$-convexity by Li, Ren and Wang \cite{LRW}. Based on their result, Chen and Xiang \cite{CX} obtain the rigidity theorems for 2-convex solution if $\sigma_3(D^2 u) \geq -A$ and a quadratic growth \eqref{gro} when $k=2$. Especially, for $n=3$, the assumption $\sigma_3(D^2 u)\geq -A$ can be redundant.

 The parabolic Monge-Amp\`ere equation
\begin{equation}\label{060506}
-u_t\det D^2u =1 \ \ in \ \ R^n\times (-\infty,0].
\end{equation}
 was firstly proposed by Krylov \cite{Kr}. Equations (\ref{060506}) naturally appear in stochastic theory. This operator was relevant in the study of deformation of a surface by
Gauss-Kronecker curvature \cite{Fi}.

As far as we know, rigidity theorems for parabolic fully nonlinear equations are known very limited.
Guti$\acute{e}$rrez and Huang \cite{GH} extended Theorem
of J$\ddot{o}$rgens, Calabi, and Pogorelov to the parabolic Monge-Amp\`ere equations. Xiong and Bao obtained Bernstein type theorems for more general cases, such as $u_t=(\det D^2u)^{1/n}$ and $u_t = \log \det D^2u$. Then S. Nakamori and K. Takimoto \cite{NT} studied the bernstein type theorem for parabolic $k$ -Hessian equations when the entire solution $u$ was convex-monotone.

Here the function $u = u(x, t) : R^n \times (\infty , 0] \rightarrow R $ is said to be convex-monotone if it is convex in $x $ and non-increasing in $t$. Furthermore, The function $u = u(x, t) : R^n \times (\infty , 0] \rightarrow R $ is said to be k-convex-monotone if it is k-convex in $x $ and non-increasing in $t$.

It would be interesting to see if the rigidity theorem holds for general parabolic $k$-Hessian equations under $k$-convex-monotone solutions. We extend the results in our recent paper \cite{CX} from elliptic case to the parabolic 2-Hessian equations,
\begin{equation}\label{SQ}
-u_t \sigma_2 (D^2u(x)) = 1,\ \ \ in \ \ \bf \mathbb{R}^n\times (-\infty,0].
\end{equation}

Our main theorem is stated as follows.
\begin{theorem}\label{main}
Given any nonnegative constant $A$, any entire $2$-convex-monotone solution $u \in C^{4,2}(\mathbb{R}^n \times (-\infty,0])$ of the
equation \eqref{SQ}
satisfying $\sigma_{3}(D^2 u(x,t))\geq -A$ and $u(x,0)$ satisfies a quadratic growth \eqref{gro}. If there exist constants
 $m_1\geq m_2>0$ such that for all $(x,t)\in R^n\times(-\infty,0],$
 \begin{equation}\label{060401}
 -m_1\leq u_t(x,t) \leq -m_2.
 \end{equation}
Then $u$ has the form $u(x,t) = -mt + p(x)$ where the constant $m>0$ and $p$ is a quadratic polynomial.
\end{theorem}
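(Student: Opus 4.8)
The plan is to reduce Theorem~\ref{main} to showing that $D^2u$ and $u_t$ are \emph{constant} on the whole slab $\mathbb{R}^n\times(-\infty,0]$. Rewriting \eqref{SQ} as
\[
u_t=-\frac{1}{\sigma_2(D^2u)}=:\Psi(D^2u),
\]
the $2$-convexity of $u$ makes this equation parabolic, and \eqref{060401} shows that along the solution $\sigma_2(D^2u)=-1/u_t$ takes values in the fixed compact interval $[1/m_1,1/m_2]$. Once we also know that $|D^2u|$ is globally bounded, the eigenvalues of $D^2u$ are confined to a fixed compact subset of $\Gamma_2$ (indeed $\sigma_2(D^2u)\ge 1/m_1$ together with $|D^2u|\le C$ forces $\sigma_1(D^2u)\ge\sqrt{2/m_1}$ and each $\sigma_1-\lambda_i$ bounded away from $0$), so \eqref{SQ} is uniformly parabolic there and $\Psi$ is concave, being minus the convex decreasing function $s\mapsto s^{-2}$ of the concave function $\sigma_2^{1/2}$. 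The proof thus splits into: (i) a parabolic Pogorelov-type interior estimate yielding $|D^2u|\le C$ on the whole slab, and (ii) a rescaling argument based on the parabolic Evans--Krylov theorem that upgrades this to constancy.

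\emph{Step (i): the parabolic Pogorelov estimate.} This is the heart of the matter and the main obstacle. On a parabolic cylinder $Q_R=B_R(x_0)\times(t_0-R^2,t_0]$ I would apply the parabolic maximum principle to an auxiliary function of the form
\[
W=\eta^{\beta}\,\exp\!\big(\phi(|Du|^2)\big)\,\lambda_{\max}(D^2u),
\]
with $\eta$ a cutoff adapted to $Q_R$, $\phi$ a suitably large increasing function, and $\lambda_{\max}(D^2u)$ the largest eigenvalue of the Hessian (handled by the standard device of replacing it by a smooth symmetric function or working where it is simple). Differentiating \eqref{SQ} twice in a spatial direction $\xi$ and evaluating the linearized parabolic operator $\partial_t-(-u_t)\sigma_2^{ij}\partial_{ij}$ on $\log W$ at an interior maximum, the task is to absorb the non-negative third-order term together with the ``bad'' gradient terms. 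Here the three hypotheses enter exactly as in Li--Ren--Wang \cite{LRW} and Chen--Xiang \cite{CX}: $2$-convexity, and $\sigma_2(D^2u)\ge 1/m_1$ (which gives $\sigma_1(D^2u)=\tfrac1{n-1}\sum_i\sigma_2^{ii}$ bounded below), make the linearization behave; $\sigma_3(D^2u)\ge-A$ is precisely what renders the otherwise non-sign-definite quantity $\sigma_2^{ij,kl}u_{ij\xi}u_{kl\xi}$ tractable when $n\ge4$; and the extra terms generated by $\partial_tW$ are proportional to $u_t$, hence bounded by $m_1$ via \eqref{060401}. Finally, since $u_t\le-m_2<0$ the function $u(\cdot,t)$ is non-increasing in $t$, so the quadratic growth \eqref{gro} of $u(\cdot,0)$ propagates to $u(x,t)\ge b|x|^2-c$ for all $t\le0$; together with the convexity of each time slice (after subtracting a spatial linear function so that $Du(x_0,t_0)=0$) this makes the relevant sublevel sets of $u$ comparable to parabolic cylinders of the right size, so the estimate on $Q_R$ is scale-invariant and the resulting bound on $|D^2u(x_0,t_0)|$ is independent of $R$. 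Letting $R\to\infty$ with $(x_0,t_0)$ arbitrary gives $\sup_{\mathbb{R}^n\times(-\infty,0]}|D^2u|\le C=C(n,A,m_1,m_2,b,c)$.

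\emph{Step (ii): rigidity via rescaling.} By Step (i) and the discussion above, \eqref{SQ} is uniformly parabolic with concave right-hand side along $u$. Fix $z_0=(x_0,t_0)$ and subtract the parabolic tangent paraboloid
\[
L(x,t)=u(z_0)+\langle Du(z_0),x-x_0\rangle+u_t(z_0)(t-t_0)+\tfrac12(x-x_0)^{\!\top}D^2u(z_0)(x-x_0);
\]
then $\tilde u:=u-L$ solves an autonomous, uniformly parabolic equation with concave right-hand side, vanishes to second order at $z_0$, and on $Q_R(z_0)$ satisfies $\|\tilde u\|_{L^\infty(Q_R(z_0))}\le C'R^2$ --- the spatial part of this bound coming from $|D^2u|\le C$ and the time part from $|u_t|\le m_1$ in \eqref{060401}. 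Applying the interior parabolic Evans--Krylov (Krylov) estimate to the parabolic rescaling $w(y,s):=R^{-2}\tilde u(x_0+Ry,\,t_0+R^2s)$, which solves an equation of the same type and satisfies $\|w\|_{L^\infty(Q_1)}\le C'$, and then unscaling, we obtain
\[
[D^2u]_{C^{\alpha,\alpha/2}(Q_{R/2}(z_0))}+[u_t]_{C^{\alpha,\alpha/2}(Q_{R/2}(z_0))}\le C\,R^{-\alpha}\longrightarrow 0\quad(R\to\infty).
\]
Since $(x_0,t_0)$ was arbitrary, $D^2u$ and $u_t$ are constant on $\mathbb{R}^n\times(-\infty,0]$: $u_t\equiv-m$ for some $m\in[m_2,m_1]\subset(0,\infty)$, and $D^2u\equiv B$ for a fixed symmetric matrix $B\in\Gamma_2$ with $m\,\sigma_2(B)=1$. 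Integrating in $t$ yields $u(x,t)=-mt+p(x)$ with $D^2p\equiv B$, so $p$ is a quadratic polynomial and $m>0$, which is the assertion of Theorem~\ref{main}.
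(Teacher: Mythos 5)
Your overall architecture --- a global bound on $|D^2u|$, followed by parabolic rescaling and the parabolic Evans--Krylov estimate to force $D^2u$ and $u_t$ to be constant --- is the same as the paper's, and your Step (ii) is essentially the paper's closing argument (which invokes Theorem 4.2 of \cite{NT}); that part is fine once a global Hessian bound is in hand. The genuine gap is Step (i), which you yourself call the heart and then do not prove. The works you lean on, \cite{LRW} and \cite{CX} (and the present paper), do not establish a pure interior estimate on cylinders with a test function $\eta^{\beta}e^{\phi(|Du|^2)}\lambda_{\max}(D^2u)$; they prove a Pogorelov-type estimate with the weight $(-u)^{\alpha}$ on domains where the solution vanishes on the (parabolic) boundary. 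That choice is not cosmetic: in the weighted computation the dangerous zero-order contribution is $\alpha(-u_t)\sigma_2^{ij}u_{ij}/u=2\alpha/u$, bounded precisely because $(-u_t)\sigma_2=1$, and the term $\tfrac12|x|^2$ produces the good term $(-u_t)(n-1)\sigma_1\ge (n-1)m_2\sigma_1$, which beats the remaining bad terms only because the (rescaled sublevel-set) domain has bounded diameter --- and that diameter bound is exactly where the quadratic growth \eqref{gro} plus monotonicity in $t$ enter. With a cutoff on $Q_R$ one instead picks up terms of size $(-u_t)\sigma_2^{ii}|D\eta|^2/\eta^2\sim\sigma_1/R^2$ and gradient terms weighted by $\phi'$, and there is no a priori control of $|Du|$ or of the oscillation of $u$ on $Q_R$ relative to $R^2$, since only a lower quadratic bound is assumed; so the asserted scale-invariance is unsubstantiated. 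You also omit the parabolic bookkeeping that actually has to be done: differentiating \eqref{SQ} twice produces $u_{t1}$-terms which must be cancelled exactly against the correction term in the concavity inequality (the paper's Corollary \ref{co1}, used with a carefully tuned $\epsilon$), in combination with the Guan--Qiu lemma (Lemma \ref{le2}) exploiting $\sigma_3\ge -A$; saying the hypotheses enter ``exactly as in'' the elliptic references does not substantiate this.

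A second concrete error: you justify comparability of sublevel sets with parabolic cylinders by ``convexity of each time slice'' after subtracting a linear function, but $u$ is only $2$-convex in $x$, so its time slices need not be convex and sublevel sets of $u$ minus an affine function need not even be bounded. The paper avoids this entirely by working on $\Omega_R=\{(y,t):u(Ry,R^2t)<R^2\}$, where $v=(u(Ry,R^2t)-R^2)/R^2$ satisfies $-1-c\le v\le 0$ and $|y|^2\le (1+c)/b$ using only \eqref{gro} and $u_t\le 0$, and then applies the boundary-value Pogorelov estimate of Lemma \ref{le3}. To repair your proof you should either actually prove the interior estimate you postulate (a substantial task, not a routine adaptation) or follow the sublevel-set route of the paper.
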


\section{Pogorelov type lemma}

Let $W=(W_{ij})$ be a symmetric tensor and
$\sigma_{k}(W)=\sigma_k(\lambda[W])$, where $\lambda[W]$ denotes the eigenvalues of the
$W$. Similarly, we say $W \in \Gamma_2$ if $\lambda[W] \in \Gamma_2$, which also
means $\sigma_1(W)>0$, $\sigma_2(W)>0$. It follows from \cite{CNS}, if $W \in \Gamma_2$,
then $\sigma_{2}^{ij}=\frac{\partial \sigma_2}{\partial W_{ij}}(W)$ is positive definite.
We first recall the following important Lemma in \cite{CQ}.
\begin{lemma}\label{le1}
Suppose $W \in \Gamma_2$ is diagonal and $W_{11}\geq\cdot\cdot\cdot\geq W_{nn}$, if $\xi_{ij}$ is symmetric and
$$\sum_{i=2}^{n}\sigma_{2}^{ii}\xi_{ii}+\sigma_{2}^{11}\xi_{11}=\eta,$$ then
$$-\sum_{i\neq j}\xi_{ii}\xi_{jj}\geq \frac{n-1}{2\sigma_{2}(W)}
\frac{[2\sigma_{2}(W)\xi_{11}-W_{11}\eta]^2}{[(n-1)W_{11}^2+2(n-2)\sigma_{2}(W)]}
-\frac{\eta^2}{2\sigma_{2}(W)}.$$
\end{lemma}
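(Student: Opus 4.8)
The plan is to reduce Lemma~\ref{le1} to a finite-dimensional constrained minimization and to solve that problem by Lagrange multipliers. Since $W=(W_{ij})$ is diagonal we have $\sigma_{2}^{ij}=0$ for $i\neq j$ and $\sigma_{2}^{ii}=\sigma_1(W)-W_{ii}=:c_i$, so only the diagonal entries $a_i:=\xi_{ii}$ enter the statement: the hypothesis reads $\sum_{i=1}^{n}c_i a_i=\eta$, while the left-hand side is $-\sum_{i\neq j}a_i a_j=\sum_{i}a_i^2-\big(\sum_i a_i\big)^2=:Q(a)$. Because the right-hand side of the claimed inequality depends only on $\xi_{11}$ and $\eta$, it suffices to fix the value $a_1=\xi_{11}$ together with $\eta$ and to show that the minimum of $Q$ over all $(a_2,\dots,a_n)$ with $\sum_{i\ge 2}c_i a_i=\eta-c_1 a_1$ is at least the claimed bound. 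In fact I expect equality: once the optimal $(a_2,\dots,a_n)$ is found, $Q$ evaluated there should be exactly the stated expression.

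First I would record the elementary identities $\sum_{i}c_i=(n-1)\sigma_1$, $\sum_i W_{ii}c_i=2\sigma_2$ and $\sum_i c_i^2=(n-1)\sigma_1^2-2\sigma_2$ (abbreviating $\sigma_j=\sigma_j(W)$), whence $C:=\sum_{i\ge 2}c_i=(n-2)\sigma_1+W_{11}$ and $D:=\sum_{i\ge 2}c_i^2=(n-2)\sigma_1^2+2\sigma_1 W_{11}-W_{11}^2-2\sigma_2$. The second-order condition for the constrained minimum is the positive definiteness of the quadratic form $\sum_{i\ge 2}a_i^2-\big(\sum_{i\ge 2}a_i\big)^2$ restricted to the hyperplane $\{\sum_{i\ge 2}c_i a_i=0\}$; by the Cauchy--Schwarz inequality this is equivalent to $C^2>(n-2)D$. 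The crucial computation is that
$$C^2-(n-2)D=(n-1)W_{11}^2+2(n-2)\sigma_2,$$
which is strictly positive because $W\in\Gamma_2$ forces $\sigma_2>0$ and $W_{11}=\max_i W_{ii}>0$ (the latter since $\sigma_1>0$). Observe that this is precisely the denominator appearing in the statement of the lemma. Consequently $Q$ is strictly convex on the constraint set and the Lagrange critical point is its unique global minimum.

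Solving the Lagrange system $2a_k-2\sum_{i}a_i=\lambda c_k$ $(k\ge 2)$ together with the constraint gives $\lambda$ explicitly, and back-substitution yields
$$\min Q=\frac{(2-n)s^2-2Cst+\big(C^2-(n-1)D\big)t^2}{(n-1)W_{11}^2+2(n-2)\sigma_2},$$
where $t=\xi_{11}$ and $s=\eta-c_1\xi_{11}=\eta-(\sigma_1-W_{11})\xi_{11}$. It then remains to substitute the values of $C$, $D$ and $s$ and simplify: this is a polynomial identity in $W_{11},\sigma_1,\sigma_2,\xi_{11},\eta$ which, matching the coefficients of $s^2$, of $st$ and of $t^2$ separately with the help of the identities above, collapses exactly to $\frac{n-1}{2\sigma_2}\,\frac{(2\sigma_2\xi_{11}-W_{11}\eta)^2}{(n-1)W_{11}^2+2(n-2)\sigma_2}-\frac{\eta^2}{2\sigma_2}$. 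Equivalently one may avoid optimization language by exhibiting the minimizer in closed form and writing $Q$ at a general competitor as this minimum plus a manifestly nonnegative quadratic form in the differences, but this is the same computation. The only genuine point, and the only place where the hypothesis $W\in\Gamma_2$ is used, is the positivity $(n-1)W_{11}^2+2(n-2)\sigma_2>0$ in the second step; everything else is bookkeeping with elementary symmetric functions, and the main obstacle is simply carrying that bookkeeping through without error.
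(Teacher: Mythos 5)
Your proposal is correct, but note that this paper contains no proof of Lemma~\ref{le1} to compare against: the lemma is simply recalled from \cite{CQ}, where it is established by direct algebraic estimates of Cauchy--Schwarz type rather than by a variational argument. Your route --- reduce to the diagonal entries $a_i=\xi_{ii}$ (legitimate, since neither the constraint nor $-\sum_{i\neq j}\xi_{ii}\xi_{jj}$ sees the off-diagonal $\xi_{ij}$), fix $t=\xi_{11}$ and $\eta$, and minimize $Q(a)=\sum_i a_i^2-(\sum_i a_i)^2$ over the affine slice $\sum_{i\ge 2}c_ia_i=\eta-c_1t$ --- is sound. The convexity criterion on the constraint hyperplane is indeed $C^2>(n-2)D$, and your key identity $C^2-(n-2)D=(n-1)W_{11}^2+2(n-2)\sigma_2>0$ (using $\sigma_2>0$ and $W_{11}>0$, since the trace is positive) checks out, so the Lagrange point is the unique global minimum. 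I also verified the bookkeeping you only sketch: at the critical point one gets $Q_{\min}=\big[(2-n)s^2-2Cst+(C^2-(n-1)D)t^2\big]/\big[(n-1)W_{11}^2+2(n-2)\sigma_2\big]$ with $s=\eta-c_1t$, and comparing coefficients of $\eta^2$, $\eta t$, $t^2$ (using $(n-2)c_1-C=-(n-1)W_{11}$ and $(2-n)c_1^2+2Cc_1+C^2-(n-1)D=2(n-1)\sigma_2$) this collapses exactly to $\frac{n-1}{2\sigma_2}\frac{(2\sigma_2\xi_{11}-W_{11}\eta)^2}{(n-1)W_{11}^2+2(n-2)\sigma_2}-\frac{\eta^2}{2\sigma_2}$. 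So your proof is complete once that computation is written out, and it has the added merit of exhibiting the right-hand side as the exact minimum, i.e.\ showing the inequality is sharp; the degenerate case $n=2$ (where the constraint determines $a_2$, since $c_2=W_{11}>0$, and there is nothing to minimize) is also covered by the same formula. One small point worth stating explicitly: $c_i=\sigma_2^{ii}>0$ for $W\in\Gamma_2$, so the constraint set is genuinely a hyperplane and the projection/Cauchy--Schwarz equivalence you invoke applies.
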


For our case when $u \in C^{4,2}(\mathbb{R}^n\times (-\infty,0])$, let $W=D^2 u$, $-u_t\sigma_2(D^2u)=1$, $\xi_{ij}=u_{ij1}$. Thus, $\eta=\frac{u_{t1}}{u_t^2}$ and
 we obtain the following corollary directly.
\begin{corollary}\label{co1}
Let $u \in C^{4,2}(\mathbb{R}^n\times (-\infty,0])$ be a 2-convex-monotone solution of \eqref{SQ}, then
$$-\sum_{i\neq j}u_{ii1}u_{jj1}\geq
\frac{n-1}{2\sigma_2}\frac{[2\sigma_2u_{111}-u_{11}\eta]^2}{[(n-1)u_{11}^2+2(n-2)\sigma_2]}-\frac{\eta^2}{2\sigma_2}.$$
\end{corollary}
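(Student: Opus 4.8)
The plan is to apply Lemma \ref{le1} directly, after verifying that the hypotheses translate correctly into the parabolic setting. First I would observe that at any fixed point $(x_0,t_0)$ we may rotate the spatial coordinates so that $W=D^2u$ is diagonal with $W_{11}\geq W_{22}\geq\cdots\geq W_{nn}$; since $u$ is $2$-convex-monotone, $\lambda[D^2u]\in\Gamma_2$, so $W\in\Gamma_2$ and $\sigma_2^{ij}$ is positive definite, exactly as required by the lemma. Next I would differentiate the equation \eqref{SQ} once in the $x_1$-direction. Writing the equation as $\sigma_2(D^2u)=-1/u_t=-u_t^{-1}$ and applying $\partial_1$ gives
\begin{equation*}
\sum_{i,j}\sigma_2^{ij}(D^2u)\,u_{ij1}=\partial_1\!\left(-u_t^{-1}\right)=\frac{u_{t1}}{u_t^2}.
\end{equation*}
At the chosen point, where $W$ is diagonal, $\sigma_2^{ij}$ is also diagonal, so the left-hand side collapses to $\sum_{i}\sigma_2^{ii}u_{ii1}=\sigma_2^{11}u_{111}+\sum_{i=2}^{n}\sigma_2^{ii}u_{ii1}$. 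Hence with $\xi_{ij}:=u_{ij1}$ and $\eta:=u_{t1}/u_t^2$ the constraint equation of Lemma \ref{le1} is satisfied.

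With these identifications, the conclusion of Lemma \ref{le1} reads
\begin{equation*}
-\sum_{i\neq j}u_{ii1}u_{jj1}\geq\frac{n-1}{2\sigma_2(W)}\cdot\frac{\bigl[2\sigma_2(W)u_{111}-W_{11}\eta\bigr]^2}{(n-1)W_{11}^2+2(n-2)\sigma_2(W)}-\frac{\eta^2}{2\sigma_2(W)},
\end{equation*}
which, abbreviating $\sigma_2(W)=\sigma_2$ and $W_{11}=u_{11}$, is precisely the stated inequality. The identity $\xi_{ii}\xi_{jj}=u_{ii1}u_{jj1}$ is coordinate-dependent, but both sides of the claimed inequality are written in the diagonalizing frame, so this is harmless; alternatively one notes the quantity $-\sum_{i\neq j}u_{ii1}u_{jj1}$ together with $u_{11}$, $\sigma_2$, and $\eta$ are the natural ingredients that will later be combined with second-order terms, and the corollary is only ever invoked at a point in the good frame.

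There is essentially no obstacle here: the only thing to be careful about is the sign and the exact form of $\eta$. Since $u_t<0$ on $\mathbb{R}^n\times(-\infty,0]$ by the monotonicity (indeed $-m_1\leq u_t\leq -m_2<0$ in the regime of Theorem \ref{main}), the expression $-u_t^{-1}$ is well-defined and smooth, and $\partial_1(-u_t^{-1})=u_t^{-2}u_{t1}$ with no sign subtlety. One should also record that $u\in C^{4,2}$ guarantees that $u_{ij1}$ and $u_{t1}$ exist and are continuous, so the differentiation step is legitimate. Thus the corollary follows immediately from Lemma \ref{le1} by specialization.
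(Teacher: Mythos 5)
Your proposal is correct and follows essentially the same route as the paper: the paper obtains the corollary by differentiating $-u_t\sigma_2(D^2u)=1$ once in $x_1$ to get $\sigma_2^{ij}u_{ij1}=u_{t1}/u_t^2$ and then applying Lemma \ref{le1} with $W=D^2u$, $\xi_{ij}=u_{ij1}$, $\eta=u_{t1}/u_t^2$, exactly as you do (and your remarks about the diagonalizing frame and the strict negativity of $u_t$, which in fact follows already from the equation together with $\sigma_2>0$, only make the one-line argument more explicit).
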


Next, we recall the following Lemma 3 in \cite{Guan1}. For completeness, we give the proof here.
\begin{lemma}\label{le2}
Under the same assumption as in Lemma \ref{le1}, and in addition that there exists
a positive constant
\begin{eqnarray}\label{le2-0}
a\leq \sqrt{\frac{\sigma_{2}(W)}{3(n-1)(n-2)}}
\end{eqnarray}
(if $n=2$, $a>0$ could be arbitrary), such that
\begin{eqnarray}\label{le2-1}
\sigma_{3}(W+aI)\geq 0,
\end{eqnarray}
then
\begin{eqnarray}\label{le2-2}
\frac{7}{6}\sigma_{2}(W)\geq (\sigma_{2}(W)+\frac{(n-1)(n-2)}{2}a^2)\geq \frac{5}{6}\sigma_{2}^{11}(W)W_{11},
\end{eqnarray}
provided that $W_{11}>6(n-2)a$. Furthermore, for any $j \in \{2,...,n\}$,
\begin{eqnarray}\label{le2-3}
|W_{jj}|\leq (n-1)^2a+\frac{7(n-1)\sigma_{2}(W)}{5W_{11}}.
\end{eqnarray}
\end{lemma}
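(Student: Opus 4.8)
The plan is to prove Lemma \ref{le2} by purely algebraic manipulations of the symmetric functions $\sigma_k$, working in the diagonal frame where $W = \mathrm{diag}(W_{11}, \dots, W_{nn})$ with $W_{11} \geq \dots \geq W_{nn}$. The two inequalities \eqref{le2-2} and \eqref{le2-3} are in fact closely linked: the second follows from the first together with the equation $\sigma_2(W) = \sum_{i} \sigma_2^{ii}(W) W_{ii}/2$ (Euler's relation). So the heart of the matter is to bound $\sigma_2^{11}(W) W_{11}$ from above by a fixed multiple of $\sigma_2(W) + \frac{(n-1)(n-2)}{2}a^2$.

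For the first inequality, I would start from hypothesis \eqref{le2-1}, $\sigma_3(W + aI) \geq 0$, and expand it in powers of $a$:
\begin{equation*}
\sigma_3(W + aI) = \sigma_3(W) + a\,\sigma_2(W)\cdot\text{(something)} + a^2 \sigma_1(W)\cdot\text{(something)} + \binom{n}{3} a^3,
\end{equation*}
using the standard identities $\frac{\partial \sigma_3}{\partial W_{ii}} = \sigma_2(W|i)$ and the fact that $\sum_i \sigma_2(W|i) = (n-2)\sigma_2(W)$, etc. The key structural observation is that $\sigma_2^{11}(W) = \sigma_1(W|1) = \sigma_1(W) - W_{11}$, and that $\sigma_2(W)$ decomposes along the first index as $\sigma_2(W) = \sigma_2(W|1) + W_{11}\sigma_1(W|1)$. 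Since $W \in \Gamma_2$ forces $\sigma_1(W|1) > 0$ (indeed $\Gamma_2 \subset \Gamma_1$ and one checks $\sigma_1$ of the $(n-1)$-tuple omitting the largest eigenvalue is still positive — this uses Newton's inequality or Maclaurin), the quantity $\sigma_2^{11}(W)W_{11}$ is at most $\sigma_2(W)$ outright when $\sigma_2(W|1) \geq 0$; the only danger is when $\sigma_2(W|1) < 0$, i.e. when the tail eigenvalues $W_{22}, \dots, W_{nn}$ are spread enough to make the $(n-1)$-variable $\sigma_2$ negative. That is exactly where the extra room $\frac{(n-1)(n-2)}{2}a^2$ and the hypothesis \eqref{le2-1} enter: $\sigma_3(W + aI) \geq 0$ controls how negative $\sigma_2(W|1)$ can be relative to $a$. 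I would isolate the worst case by treating $W_{11}$ as large (using $W_{11} > 6(n-2)a$) so that the $a^3$ and lower-order-in-$W_{11}$ terms are absorbed, obtaining $\sigma_2(W|1) \geq -c(n)a^2$ for an explicit $c(n)$, and then $\sigma_2^{11}(W)W_{11} = \sigma_2(W) - \sigma_2(W|1) \leq \sigma_2(W) + c(n)a^2$. Tracking constants and using \eqref{le2-0} to convert $a^2 \leq \frac{\sigma_2(W)}{3(n-1)(n-2)}$ should yield the stated numerical constants $\tfrac{5}{6}$ and $\tfrac{7}{6}$ after absorbing the error into a fraction of $\sigma_2(W)$.

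For \eqref{le2-3}, fix $j \in \{2, \dots, n\}$. By Euler's relation $2\sigma_2(W) = \sum_i \sigma_2^{ii}(W)W_{ii}$, and since each $\sigma_2^{ii}(W) = \sigma_1(W) - W_{ii} > 0$, one can solve for $W_{jj}$: from $\sigma_2(W|j) + W_{jj}\,\sigma_2^{jj}(W) = \sigma_2(W)$ we get $W_{jj} = (\sigma_2(W) - \sigma_2(W|j))/\sigma_2^{jj}(W)$, so a two-sided bound on $W_{jj}$ follows from a lower bound on $\sigma_2^{jj}(W)$ and bounds on $\sigma_2(W|j)$. A lower bound $\sigma_2^{jj}(W) = \sigma_1(W) - W_{jj} \geq W_{11}/(n-1)$-type estimate comes from \eqref{le2-2}: indeed $\sigma_2^{11}(W) = \sigma_1(W) - W_{11} \leq \frac{6}{5}\sigma_2(W)/W_{11}$ combined with positivity of the tail gives control of $\sigma_1(W)$ and hence of each $\sigma_2^{jj}(W)$ from below in terms of $W_{11}$; meanwhile $|\sigma_2(W|j)| \leq |\sigma_2(W|1)| + (\text{terms involving }W_{11}, a)$ is bounded using \eqref{le2-1} again. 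Assembling these gives the claimed $|W_{jj}| \leq (n-1)^2 a + 7(n-1)\sigma_2(W)/(5W_{11})$.

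The main obstacle I anticipate is the bookkeeping in the worst case of \eqref{le2-2}: getting the specific rational constants $\tfrac56, \tfrac76$ rather than some unspecified $c(n)$ requires carefully choosing where to spend the budget $a \leq \sqrt{\sigma_2(W)/(3(n-1)(n-2))}$ and the assumption $W_{11} > 6(n-2)a$, and I expect the proof in \cite{Guan1} does this with a clever grouping of terms after expanding $\sigma_3(W+aI)$; rederiving that grouping (rather than just an estimate of the right shape) is the delicate part. A secondary subtlety is verifying $\sigma_1(W|1) > 0$ and more generally that the partial symmetric functions behave well under omitting the largest eigenvalue — this is where one uses that $\lambda \in \Gamma_2$ with entries ordered decreasingly implies $(\lambda_2, \dots, \lambda_n) \in \Gamma_1$, which is standard but worth stating explicitly.
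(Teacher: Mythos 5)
First, a remark on the comparison itself: despite the sentence ``For completeness, we give the proof here,'' the paper never actually proves Lemma \ref{le2} --- it is quoted verbatim from Lemma 3 of \cite{Guan1} --- so your sketch has to be measured against the standard Guan--Qiu argument. Your skeleton for \eqref{le2-2} is the right one (write $\sigma_2^{11}(W)W_{11}=\sigma_2(W)-\sigma_2(W|1)$ and use \eqref{le2-1} to control how negative $\sigma_2(W|1)$ can be), but the quantitative step you propose is wrong: the intermediate bound $\sigma_2(W|1)\ge -c(n)a^2$ is false for every fixed $c(n)$. Take $n=3$ and $W=\mathrm{diag}(T,s,-a)$ with $T$ huge and $s\gg a$: then $W\in\Gamma_2$, $\sigma_3(W+aI)=0$, $a\le\sqrt{\sigma_2/6}$ and $W_{11}>6(n-2)a$ all hold, yet $\sigma_2(W|1)=-sa$, which is far below $-c(n)a^2$. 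The correct mechanism, which your plan of ``expanding $\sigma_3(W+aI)$ in powers of $a$'' never reaches (that expansion produces $\sigma_3(W)$, not $\sigma_2(W|1)$), is: since $\sigma_1(W+aI)>0$, $\sigma_2(W+aI)>0$ and $\sigma_3(W+aI)\ge 0$, the matrix $W+aI$ lies in $\overline{\Gamma}_3$, and the standard fact $\lambda\in\Gamma_k\Rightarrow\sigma_{k-1}(\lambda|i)>0$ (passed to the closure) gives $\sigma_2\bigl((W+aI)|1\bigr)\ge 0$, i.e.
\begin{equation*}
\sigma_2(W|1)\;\ge\;-(n-2)a\,\sigma_1(W|1)\;-\;\tfrac{(n-1)(n-2)}{2}a^2 .
\end{equation*}
The cross term $(n-2)a\,\sigma_1(W|1)$ cannot be discarded (in the example above it is exactly what saves the inequality); it is absorbed via $W_{11}>6(n-2)a$ into $\tfrac16 W_{11}\sigma_1(W|1)=\tfrac16\sigma_2^{11}(W)W_{11}$, which after rearranging yields $\tfrac56\sigma_2^{11}(W)W_{11}\le \sigma_2(W)+\tfrac{(n-1)(n-2)}{2}a^2$, and \eqref{le2-0} gives the left inequality of \eqref{le2-2}. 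So what you deferred as ``bookkeeping'' is in fact the missing idea: the $\overline\Gamma_3$ structure of $W+aI$ and the absorption of the $a\,\sigma_1(W|1)$ term into the quantity being estimated, not into $a^2$ or a fraction of $\sigma_2(W)$ alone.

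Your treatment of \eqref{le2-3} has a similar soft spot: the lower bound ``$\sigma_2^{jj}(W)\gtrsim W_{11}/(n-1)$'' is asserted rather than derived, and it is not needed. Once \eqref{le2-2} is known, the clean route is again through restrictions of $W+aI$: since $(W+aI)|1\in\overline\Gamma_2$ in $n-1$ variables, one has $\sum_{i\ge 2,\,i\neq j}(W_{ii}+a)\ge 0$, hence $W_{jj}\le \sigma_1(W|1)+(n-2)a$ and $W_{jj}\ge W_{nn}\ge -(n-2)\bigl(W_{22}\bigr)\ge -(n-2)\bigl(\sigma_1(W|1)+(n-2)a\bigr)$; combining with $\sigma_1(W|1)=\sigma_2^{11}(W)\le \tfrac{7}{5}\sigma_2(W)/W_{11}$ from \eqref{le2-2} gives exactly \eqref{le2-3}. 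In short, your outline identifies the right quantities but the pivotal inequality you plan to prove is false as stated, and repairing it requires the $\overline\Gamma_3$/restriction argument above rather than a direct expansion of $\sigma_3(W+aI)$.
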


Using the above lemma for the solution of the equation \eqref{SQ}, we can have
\begin{corollary}
Let $u$ be a 2-convex-monotone solution of \eqref{SQ}. Assume $D^2u$ is diagonal and
$u_{11}\geq\cdot\cdot\cdot\geq u_{nn}$, there exists a constant $A$ sufficiently large  such that
$$\sigma_3(D^2u)\geq -A,$$
and
\begin{eqnarray}\label{u11}
u_{11}\geq \frac{6(n-2)}{n}A,
\end{eqnarray}
then
\begin{eqnarray}\label{co2-1}
\sigma_2(D^2u)\geq \frac{5}{7}\sigma_{2}^{11}(D^2 u)u_{11},
\end{eqnarray}
and
\begin{eqnarray}\label{co2-2}
|u_{jj}|&\leq &(n-1)^2 \sqrt{\frac{2A}{n(n-1)\sigma_1m_1}}+\frac{7(n-1)\sigma_2}{5u_{11}}\\
&\leq& (n-1)^2 \sqrt{\frac{2A}{n(n-1)u_{11}m_1}}+\frac{7(n-1)}{5u_{11}m_2}.
\end{eqnarray}
\end{corollary}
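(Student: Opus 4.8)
The plan is to apply Lemma \ref{le2} directly with $W = D^2 u$ and a carefully chosen constant $a$. First I would set
$$a = \sqrt{\frac{2A}{n(n-1)\sigma_1 m_1}},$$
and verify the two hypotheses of Lemma \ref{le2}. For the smallness condition \eqref{le2-0}, I would use the equation \eqref{SQ} together with \eqref{060401}: since $-u_t \geq m_2 > 0$ and $\sigma_2(D^2u) = -1/u_t$, we get a lower bound on $\sigma_2$, and combined with an upper bound $-u_t \leq m_1$ this lets me control $a$ in terms of $\sigma_2(W)$; the constant $A$ being "sufficiently large" is used only to make the algebra work out (in fact one may need $A$ large relative to $n$ and $m_1$ so that $a$ is comparable to $\sqrt{\sigma_2/(n(n-1))}$, which is slightly larger than the threshold $\sqrt{\sigma_2/(3(n-1)(n-2))}$ only up to a harmless constant — this is the point that needs care). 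For the cone condition \eqref{le2-1}, $\sigma_3(W + aI) \geq 0$, I would expand
$$\sigma_3(W+aI) = \sigma_3(W) + a\,\sigma_2(W)(n-2) + a^2 \sigma_1(W)\binom{n-1}{2}\big/\text{(combinatorial factor)} + a^3\binom{n}{3},$$
and note that the hypothesis $\sigma_3(D^2u) \geq -A$ together with the choice of $a$ (whose square carries exactly the factor $2A/(n(n-1)\sigma_1 m_1)$, producing a term that dominates $-A$ once multiplied by the appropriate coefficient and using $\sigma_1 \geq$ a bound coming from $\sigma_2 > 0$) forces the sum to be nonnegative.

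Next I would check the threshold $W_{11} > 6(n-2)a$. Using \eqref{u11}, namely $u_{11} \geq \frac{6(n-2)}{n}A$, and the definition of $a$, this reduces to verifying $\frac{6(n-2)}{n}A \geq 6(n-2)\sqrt{\frac{2A}{n(n-1)\sigma_1 m_1}}$, i.e. $A^2/n^2 \geq \frac{2A}{n(n-1)\sigma_1 m_1}$, which holds for $A$ large (again the "sufficiently large $A$" hypothesis). With both hypotheses of Lemma \ref{le2} in hand, inequality \eqref{le2-2} gives
$$\tfrac{7}{6}\sigma_2(D^2u) \geq \tfrac{5}{6}\sigma_2^{11}(D^2u)\,u_{11},$$
from which \eqref{co2-1}, $\sigma_2(D^2u) \geq \frac{5}{7}\sigma_2^{11}(D^2u)u_{11}$, follows by clearing the $\frac56$ and $\frac76$. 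Similarly, \eqref{le2-3} gives $|u_{jj}| \leq (n-1)^2 a + \frac{7(n-1)\sigma_2}{5 u_{11}}$, which upon substituting $a = \sqrt{\frac{2A}{n(n-1)\sigma_1 m_1}}$ is precisely the first line of \eqref{co2-2}.

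For the second line of \eqref{co2-2} I would estimate the two terms separately. In the first term, $\sigma_1 = u_{11} + \sum_{j\geq 2} u_{jj} \geq u_{11}$ would be too crude unless one already knows the $u_{jj}$ are controlled; instead I would use that $\sigma_1 \geq$ something comparable to $u_{11}$ — more precisely, since $D^2 u \in \Gamma_2$ we have $\sigma_1 > 0$ and in fact $\sigma_1 \geq c_n u_{11}$ is false in general, so the honest route is: $\sigma_1 u_{11} \geq \sigma_1 \cdot 1$-type bounds are not enough, and one instead replaces $\sigma_1$ in the denominator by $u_{11}$ using that $\sigma_1 \geq u_{11}$ when all other eigenvalues are nonnegative, or more carefully uses $2\sigma_2 \leq \frac{n-1}{n}\sigma_1^2$ (Newton-Maclaurin) to get $\sigma_1 \geq \sqrt{2n\sigma_2/(n-1)}$ together with $\sigma_1 \geq u_{11}$ when convenient — the cleanest is $\sigma_1 \ge u_{11}$, valid here because $u$ is $2$-convex-monotone so by the ordering $u_{11}$ is the largest and, crucially, once we are past the threshold the trailing eigenvalues contribute nonnegatively in aggregate via \eqref{le2-3}. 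Thus $\sqrt{2A/(n(n-1)\sigma_1 m_1)} \leq \sqrt{2A/(n(n-1)u_{11}m_1)}$. For the second term, I would bound $\sigma_2 = -1/u_t \leq 1/m_2$ from \eqref{060401}, giving $\frac{7(n-1)\sigma_2}{5u_{11}} \leq \frac{7(n-1)}{5u_{11}m_2}$. Combining the two yields the second line of \eqref{co2-2}.

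The main obstacle I anticipate is pinning down exactly how large $A$ must be and reconciling the constant $\sqrt{2A/(n(n-1)\sigma_1 m_1)}$ in the statement with the threshold $a \le \sqrt{\sigma_2/(3(n-1)(n-2))}$ required by Lemma \ref{le2}: the ratio of $n(n-1)$ to $3(n-1)(n-2)$ is not $\leq 1$, so one genuinely needs the lower bound on $\sigma_2$ coming from $-u_t \leq m_1$ (hence $\sigma_2 \geq 1/m_1$) to absorb the discrepancy, and this is where the interplay of \eqref{SQ}, \eqref{060401}, and "sufficiently large $A$" all enter simultaneously. Everything else is bookkeeping with Newton–Maclaurin inequalities and the explicit formula for $\sigma_3(W + aI)$.
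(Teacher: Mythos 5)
Your overall route is the paper's own: take $a=\sqrt{2A/(n(n-1)\sigma_1 m_1)}$, verify the two hypotheses of Lemma \ref{le2} plus the threshold $W_{11}>6(n-2)a$, read \eqref{co2-1} and the first line of \eqref{co2-2} off \eqref{le2-2}--\eqref{le2-3}, and get the second line of \eqref{co2-2} from $\sigma_1\ge u_{11}$ and $\sigma_2=1/(-u_t)\le 1/m_2$. Those last steps are fine (and $\sigma_1\ge u_{11}$ needs none of your hedging: $\sigma_2^{11}=\sigma_1-u_{11}>0$ simply because $D^2u\in\Gamma_2$). The gaps are in how you check the two hypotheses of Lemma \ref{le2}.

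For \eqref{le2-0}, arguing that $a$ is ``comparable to $\sqrt{\sigma_2/(n(n-1))}$ up to a harmless constant'' cannot work: \eqref{le2-0} is a hard threshold with the specific constant $3(n-1)(n-2)$, and no slack is available. The paper's chain is exact and uses \eqref{u11}, which you never invoke at this step: $\sigma_1\ge u_{11}\ge\frac{6(n-2)}{n}A$ gives $a^2\le\frac{2A}{n(n-1)u_{11}m_1}\le\frac{1}{3(n-1)(n-2)m_1}$, and then $\sigma_2\ge 1/m_1$ (from $-u_t\le m_1$ and \eqref{SQ}; note this bound comes from $m_1$, not $m_2$ as you first wrote) yields $a^2\le\sigma_2/(3(n-1)(n-2))$ with no largeness of $A$ needed here. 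For \eqref{le2-1}, your claim that the $a^2\sigma_1$ term ``dominates $-A$ \dots using $\sigma_1\ge$ a bound coming from $\sigma_2>0$'' fails as stated: $\sigma_1$ cancels exactly against the $\sigma_1$ inside $a^2$, so that term is a fixed multiple of $A/m_1$ (it equals $\frac{(n-2)A}{nm_1}$ with the correct coefficient $\binom{n-1}{2}$), and no lower bound on $\sigma_1$ nor any choice of large $A$ improves this ratio. The paper's proof at this point simply discards the $a\sigma_2$ and $a^3$ terms and asserts $\sigma_3(W)+\frac{n(n-1)}{2}a^2\sigma_1\ge 0$, i.e.\ it relies on the identity $\frac{n(n-1)}{2}a^2\sigma_1=A/m_1$ (which beats $-A$ only when $m_1$ is not large); if you follow this route you must carry out that exact cancellation and address the role of the $1/m_1$ factor explicitly, rather than appeal to a dominance the chosen $a$ does not provide. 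Your verification of the threshold $u_{11}>6(n-2)a$ is essentially the paper's and is fine once you note that it rests on $\sigma_1\ge u_{11}\ge\frac{6(n-2)}{n}A$.
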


\begin{proof}
We may pick $a=\sqrt{\frac{2A}{n(n-1)\sigma_{1}(D^2 u)m_1}}$. Since $u$ is 2-convex-monotone,
$u_{11}\leq\sigma_1$. Then clearly,
\begin{eqnarray*}
a=\sqrt{\frac{2A}{n(n-1)\sigma_{1}m_1}}\leq \sqrt{\frac{2A}{n(n-1)u_{11}m_1}}
\leq\sqrt{\frac{1}{3(n-1)(n-2)m_1}}\leq\sqrt{\frac{\sigma_2}{3(n-1)(n-2)}},
\end{eqnarray*}
in view of \eqref{u11}. Meanwhile,
\begin{eqnarray*}
\sigma_{3}(W+aI)&=&\sigma_{3}(W)+na\sigma_{2}(W)
+\frac{n(n-1)}{2}a^2\sigma_{1}(W)+\frac{n(n-1)(n-2)}{6}a^3
\\&\geq& \sigma_{3}(W)+\frac{n(n-1)}{2}a^2\sigma_{1}(W)\\&\geq&0,
\end{eqnarray*}
which guarantees the condition \eqref{le2-1} is satisfied. Lastly, if we choose $A$ sufficiently large, we have from \eqref{u11}
$$u_{11}^{\frac{3}{2}}\geq 6(n-2)\sqrt{\frac{2A}{n(n-1)m_1}},$$
which implies
$$u_{11}\geq 6(n-2)\sqrt{\frac{2A}{n(n-1)u_{11}m_1}}\geq 6(n-2)\sqrt{\frac{2A}{n(n-1)\sigma_{1}m_1}}=6(n-2)a.$$
Then, this corollary
follows from Lemma \ref{le2} directly.
\end{proof}

We introduce some notations. If $\Omega\subset R^n\times(-\infty,0]$ and $t\leq 0,$ $\Omega(t)$ is denoted by
$$\Omega(t) =\{x\in R^n| (x,t)\in \Omega\}.$$
Let $\Omega\subset R^n \times (-\infty,0]$ be a bounded set and $t_0=\inf\{t\leq0|\Omega(t)\neq\emptyset\}$. The parabolic boundary $\partial_p\Omega$
is defined by
$$\partial_p\Omega=(\overline{\Omega(t_0)}\times{t_0})\cup \bigcup_{t\leq 0}(\partial\Omega(t)\times {t}),$$
where $\overline{\Omega(t_0)}$ denotes the closure of $\Omega(t_0)$ and $\partial \Omega(t)$ denotes the boundary of $\Omega(t)$.

To prove Theorem \ref{main}, we need the following key Lemma.
\begin{lemma}\label{le3}
Let $\Omega$ be a bounded domain in $\mathbb{R}^n\times(-\infty,0]$, and $u \in C^{4,2}(\overline{\Omega}) $ a 2-convex-monotone solution to
\begin{equation}\label{SQ-}
\left\{
\begin{aligned}
&-u_t\sigma_2(D^2u)=1,  x \in \Omega; \\
&u=0, \quad x \in \partial_p\Omega.
\end{aligned}
\right.
\end{equation}
Assume $$\sigma_{3}(D^2 u)\geq -A,$$ for some positive constant $A$.
Then, for any $2$-convex-monotone solution $u$, we have the Pogorelov type estimate,
\begin{eqnarray}\label{est}
\max_{x \in \Omega}(-u)^{\alpha}|D^2 u|\leq C
\end{eqnarray}
for sufficiently large $\alpha>0$. Here $\alpha$ and
 $C$ only depend on $A$, $n$, $m_1$, $m_2$, $diam(\Omega_R(t))$ and  $|u|_{C^0(\Omega)}$.
\end{lemma}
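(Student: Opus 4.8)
The plan is to follow the classical Pogorelov strategy adapted to the parabolic setting. I would consider the auxiliary function
\[
\Phi(x,t,\xi)=\beta\log(-u)+\log u_{\xi\xi}+\frac{\gamma}{2}|Du|^2
\]
for a unit vector $\xi\in S^{n-1}$, with positive constants $\beta,\gamma$ to be chosen, and study $\max_{\overline\Omega\times S^{n-1}}\Phi$. Since $u=0$ on $\partial_p\Omega$ and $u<0$ inside (by the monotone condition \eqref{060401} together with the comparison principle, so $-u$ is bounded away from $0$ on compact interior sets), the maximum is attained at an interior point $(x_0,t_0)$ and in a direction which, after rotation, we may take to be $e_1$; at that point we may also assume $D^2u$ is diagonal with $u_{11}\geq\cdots\geq u_{nn}$. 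The goal is to show $(-u)^{\beta}u_{11}$ is bounded there, which, since $u_{11}$ controls $|D^2u|$ for $2$-convex $u$, gives \eqref{est} with $\alpha=\beta$.

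The computation proceeds by applying the linearized operator $L=\sigma_2^{ij}\partial_{ij}-(\text{time part})$ to $\Phi$. Differentiating the equation $-u_t\sigma_2(D^2u)=1$ once and twice in $e_1$ produces, respectively, the identity $\sigma_2^{ii}u_{ii1}=u_{t1}/u_t^2=\eta$ and a second-order identity whose bad term is $-\sum_{i\neq j}u_{ii1}u_{jj1}+\sum_{\text{off-diag}}(\cdots)$; here Corollary \ref{co1} is exactly what is needed to bound $-\sum_{i\neq j}u_{ii1}u_{jj1}$ from below. The third-derivative terms $u_{11i}^2/u_{11}^2$ coming from $\log u_{11}$ are absorbed using the critical-point relations $\partial_i\Phi=0$, i.e. $\beta u_i/u+u_{11i}/u_{11}+\gamma u_iu_{ii}=0$, in the usual way: for $i=1$ directly, and for $i\geq 2$ by Cauchy--Schwarz after noting $|u_{ii}|$ is controlled by Corollary 2.6 once $u_{11}$ is large. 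The term $\frac{\gamma}{2}|Du|^2$ contributes a positive quantity $\gamma\sigma_2^{ii}u_{ii}^2\geq \gamma\,\sigma_2^{11}u_{11}^2/C$ which dominates the remaining negative third-order junk when $u_{11}$ is large; the term $\beta\log(-u)$ contributes $-\beta\sigma_2^{ii}u_{ii}^2/u^2$ which must be controlled by the diameter of $\Omega_R(t)$ and $|u|_{C^0}$, forcing $\beta$ to be taken large. The parabolic terms are handled using \eqref{060401}: $-u_t\in[m_2,m_1]$ keeps $\sigma_2=-1/u_t$ pinched between $1/m_1$ and $1/m_2$, and the time-derivative contributions $\eta=u_{t1}/u_t^2$, $u_{tt}$ are bounded by the gradient and Hessian bounds already in play. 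Collecting, at $(x_0,t_0)$ one obtains an inequality of the form $0\geq L\Phi\geq c\,\sigma_2^{11}u_{11}^2-C(\beta,\gamma,\text{data})\,(1+u_{11})$, which yields $u_{11}(x_0,t_0)\le C$ after using \eqref{co2-1} to replace $\sigma_2^{11}u_{11}$ by $\sigma_2\geq m_2^{-1}$; hence $\Phi\leq C$ everywhere.

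I expect the main obstacle to be the second-order differentiated equation: unlike the Monge--Amp\`ere case the operator $\sigma_2$ is not concave enough to give a free sign, so the term $-\sum_{i\ne j}u_{ii1}u_{jj1}$ together with the off-diagonal third derivatives must be handled delicately, and this is precisely where the $\sigma_3\geq -A$ hypothesis enters through Lemma \ref{le2}/Corollary 2.6 — it is what allows us to say that once $u_{11}$ exceeds the threshold $\tfrac{6(n-2)}{n}A$ the Hessian eigenvalues $u_{jj}$ ($j\geq 2$) are essentially bounded, so that the quadratic-in-$u_{11}$ good term genuinely dominates. A secondary technical point is justifying that $-u$ is bounded below by a positive constant on the region where the maximum can occur (so that $\beta\log(-u)$ and $\beta u_i/u$ make sense and are controlled); this follows from constructing an explicit subsolution/barrier on $\Omega$ using the bound \eqref{060401} and $\mathrm{diam}(\Omega_R(t))$, exactly as in the convex-monotone theory of Nakamori--Takimoto.
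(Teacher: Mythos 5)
Your overall strategy (Pogorelov function with weight $\log(-u)$, the linearized operator $L=\frac{1}{u_t}D_t+(-u_t)\sigma_2^{ij}D_{ij}$, Corollary \ref{co1} for the term $-\sum_{i\neq j}u_{ii1}u_{jj1}$, and the $\sigma_3\geq -A$ hypothesis entering through Lemma \ref{le2}) is the same as the paper's, but your choice of test function creates a genuine gap. You take $\Phi=\beta\log(-u)+\log u_{\xi\xi}+\frac{\gamma}{2}|Du|^2$, whereas the paper uses $\frac12|x|^2$ in place of $\frac{\gamma}{2}|Du|^2$. With the gradient term, the critical-point relation reads $\beta u_i/u+u_{11i}/u_{11}+\gamma u_iu_{ii}=0$, so when you trade the bad term $-\beta(-u_t)\sigma_2^{ii}u_i^2/u^2$ (note: it is $u_i^2$, first derivatives, not $u_{ii}^2$) for third-order terms, you are left with a remainder of size $\frac{\gamma^2}{\beta}(-u_t)\sigma_2^{ii}u_i^2u_{ii}^2$, and also the final step $(-u)^\beta u_{11}\leq e^{\Phi(x_0,t_0)}$ carries the factor $e^{\frac{\gamma}{2}|Du(x_0)|^2}$. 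Both require an a priori interior gradient bound. For merely $2$-convex (non-convex) solutions no such bound follows from $|u|_{C^0}$ and the geometry, none is supplied by the hypotheses of Lemma \ref{le3}, and $|Du|$ is not among the quantities the constant is allowed to depend on. This is precisely why the paper uses $\frac12|x|^2$: its first derivatives $x_i$ are controlled by $diam(\Omega_R(t))$ (which is how the diameter enters — in your version the diameter never appears naturally), and its Hessian produces the decisive positive term $(-u_t)(n-1)\sigma_1\geq (n-1)m_2u_{11}$.

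A second, related error is your claim that the gradient term yields a quantity $\gamma\sigma_2^{ii}u_{ii}^2\gtrsim \sigma_2^{11}u_{11}^2$ that is ``quadratic in $u_{11}$'' and that \eqref{co2-1} lets you ``replace $\sigma_2^{11}u_{11}$ by $\sigma_2\geq m_2^{-1}$''. Inequality \eqref{co2-1} goes the other way: it gives the upper bound $\sigma_2^{11}u_{11}\leq\frac{7}{5}\sigma_2\leq\frac{7}{5m_2}$, so $\sigma_2^{11}u_{11}^2$ is in fact only linear in $u_{11}$ in the regime of interest; a lower bound $\sigma_2^{11}u_{11}\geq\frac12\sigma_2\geq\frac1{2m_1}$ (note $m_1$, not $m_2$, since $\sigma_2=1/(-u_t)$) would instead come from \eqref{co2-2}, which forces the eigenvalues $u_{jj}$, $j\geq2$, to be small when $u_{11}$ is large. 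So your claimed closing inequality $0\geq c\,\sigma_2^{11}u_{11}^2-C(1+u_{11})$ does not by itself give $u_{11}\leq C$ as stated, and in any case the term $C(1+u_{11})$ hides the gradient-dependent remainder above. Finally, the time-derivative terms $u_{t1}$ are not ``bounded by bounds already in play''; in the paper they cancel exactly, the $-\frac32\frac{u_{t1}^2}{u_{11}u_t^2}$ loss from Corollary \ref{co1} being absorbed by the $+\frac{2u_{t1}^2}{u_{11}u_t^2}$ produced by differentiating the equation twice, and your argument needs this algebraic cancellation rather than an a priori bound on $u_{t1}$.
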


\begin{proof}
 Since $u=0$ on $\partial _p\Omega$, we have $u\leq 0$ in $\Omega$ by the Comparison principle (see Theorem 17.1 in Page 443 of \cite{GT}).

 $\sigma_1 (D^2u)>0$, we obtain $$|D^2 u|\leq (n-1)\max_{\xi\in\mathbb{S}^{n-1}}u_{\xi\xi}.$$
So we need only to estimate
\begin{eqnarray*}
\max_{(x,t, \xi)\in \Omega\times \mathbb{S}^{n-1}}(-u)^{\alpha}u_{\xi\xi}\leq C.
\end{eqnarray*}
Now we consider the function for $(x,t) \in \overline{\Omega}$, $\xi \in \mathbb{S}^{n-1}$,
\begin{eqnarray*}
\widetilde{P}(x,t, \xi)=\alpha\log (-u)+\log \max \{u_{\xi\xi}, 1\}+\frac{1}{2}|x|^2,
\end{eqnarray*}
where $\alpha$ is a constant to be determined later.
By $u=0$ on $\partial_p \Omega$, the maximum of $\widetilde{P}$ is attained
in some interior point $(x_0,t_0) \in \overline{\Omega}/\partial_p\Omega$ and some $\xi(x_0) \in \mathbb{S}^{n-1}$.
Choose smooth orthonormal local frames $e_1, \ldots, e_n$
about $x_0$ such that $\xi(x_0)=e_1$ and $\{u_{ij} (x_0,t_0)\}$ is diagonal. Set
$$u_{11}(x_0,t_0)\geq u_{22}(x_0)\geq...\geq u_{nn}(x_0).$$
We may also assume that $u_{11}(x_0)\geq 1$ is sufficiently large.
Then we consider the function
\begin{eqnarray*}
P(x,t)=\alpha\log (-u)+\log u_{11}+\frac{1}{2}|x|^2.
\end{eqnarray*}
Note that $(x_0,t_0)$ is also a maximum point of $P$. We want to estimate $P(x_0,t_0)$.

At the maximum point $(x_0,t_0)$,
\begin{eqnarray}\label{1-diff}
0=P_i=\frac{\alpha u_i}{u}+\frac{u_{11i}}{u_{11}}+ x_i.
\end{eqnarray}
\begin{eqnarray}\label{060501}
0\leq P_t=\frac{\alpha u_t}{u}+\frac{u_{11t}}{u_{11}}.
\end{eqnarray}
Noticing
\begin{eqnarray}\label{060505}
0\leq P_{ij}=\frac{\alpha u_{ij}}{u}-\frac{\alpha u_i u_j}{u^2}
+\frac{u_{11ij}}{u_{11}}-\frac{u_{11i}u_{11j}}{u_{11}^2}+\delta_{ij},
\end{eqnarray}
 Differential equation \eqref{SQ} in $k$-th variable,
\begin{equation}\label{060502}
(-u_t)\sigma_{2}^{ij}u_{ijk}-\sigma_2u_{tk}=0.
\end{equation}
Taking differentiating once more of the equation \eqref{SQ},
\begin{eqnarray*}
(-u_{tl})\sigma^{ij}_2 u_{ijk}-u_t\sigma^{ij,mn}_2u_{ijk}u_{mnl}-u_t\sigma^{ij}_2u_{ijkl}-
\sigma_2^{ij}u_{ijl}u_{tk}-\sigma_2 u_{tkl}=0.
\end{eqnarray*}
By \eqref{060502}, we have
\begin{equation}\label{060503}
-u_t\sigma^{ij,mn}_2u_{ijk}u_{mnl}-u_t\sigma^{ij}_2u_{ijkl}-
2(u_t)^{-2}u_{tl}u_{tk}+(u_t)^{-1}u_{tkl}=0.
\end{equation}
Especially,
\begin{equation}\label{060602}
\frac{u_{11t}}{u_tu_{11}}-\frac{u_t}{u_{11}}\sigma^{ij}_2u_{ij11}=\frac{u_t}{u_{11}}\sigma^{ij,mn}_2u_{ij1}u_{mn1}+\frac{2u_{t1}^2}{u_{11}u_t^2}.
\end{equation}

Let $L$ be the linearized operator of \eqref{SQ} at $(x_0,t_0)$. Then we can write
\begin{equation}\label{060504}
L= \frac{1}{u_t(x_0,t_0)}D_t + (-u_t(x_0,t_0)) \sigma^{ij}_2(x_0,t_0)D_{ij}.
\end{equation}
By \eqref{1-diff}, \eqref{060501} and \eqref{060505}, we have
\begin{eqnarray*}
0&\geq & L(P)(x_0,t_0)\\
 &= & \frac{1}{u_t}(\frac{\alpha u_t}{u}+\frac{u_{11t}}{u_{11}})+(-u_t)\sigma_2^{ij}[\frac{\alpha u_{ij}}{u}-\frac{\alpha u_i u_j}{u^2}
+\frac{u_{11ij}}{u_{11}}-\frac{u_{11i}u_{11j}}{u_{11}^2}+\delta_{ij}]\\
&=& \frac{\alpha}{u} + \frac{u_{11t}}{u_t u_{11}}+\frac{2\alpha}{u}+\frac{\alpha u_t\sigma^{ij}_2u_ju_j}{u^2}
-\frac{u_t \sigma^{ij}_2u_{11ij}}{u_{11}}+\frac{u_t\sigma^{ij}_2u_{11i}u_{11j}}{u_{11}^2}-u_t(n-1)\sigma_1\\
&=& \frac{3\alpha}{u}+\frac{u_t \sigma^{ij,mn}_2 u_{ij1}u_{mn1}}{u_{11}}+2\frac{u_{t1}^2}{u_{11}u_t^2}
+\frac{\alpha u_t\sigma^{ij}_2u_ju_j}{u^2}+\frac{u_t\sigma^{ij}_2u_{11i}u_{11j}}{u_{11}^2}-u_t(n-1)\sigma_1.
\end{eqnarray*}
Note that
\begin{equation*}
u_t\sigma^{ij,mn}_2u_{ij1}u_{mn1}=(-u_t)( \sum_{i\neq j}u_{ij1}^2-\sum_{i\neq j}u_{ii1}u_{jj1}).
\end{equation*}

Now we want to estimate the second term on the right side of the above equality.
Assume that $u_{11}\geq \sqrt{\frac{2(1-\epsilon)(n-2)}{(n-1) \epsilon m_2}}$ at $x_0$, here $\epsilon $ to be determined,
otherwise our Lemma holds true. Then, using Corollary \ref{co1}, we obtain
\begin{eqnarray*}
(-u_t)(-\sum_{i\neq j}u_{ii1}u_{jj1})&\geq&\frac{n-1}{2}\frac{[2\sigma_2u_{111}u_t^2-u_{11}u_{t1}]^2}{[(n-1)u_{11}^2u^2_t+2(n-2)\sigma_2u_t^2]}-\frac{u_{t1}^2}{2u_t^2}\\
&\geq& \frac{1-\epsilon}{2}\frac{[2\sigma_2u_{111}u_t^2-u_{11}u_{t1}]^2}{u_{11}^2u^2_t}-\frac{u_{t1}^2}{2u_t^2}.
\end{eqnarray*}
Using Cauchy inequality, we have
\begin{equation*}\label{061301}
[2\sigma_2u_{111}u_t^2-u_{11}u_{t1}]^2\geq (1-\frac{1-\epsilon}{4-\epsilon})(2\sigma_2u_{111}u_t^2)^2+(1-\frac{4-\epsilon}{1-\epsilon})(u_{11}u_{t1})^2.
\end{equation*}
Then
\begin{eqnarray*}
(-u_t)(-\sum_{i\neq j}u_{ii1}u_{jj1})&\geq&\frac{6(1-\epsilon)}{4-\epsilon}\frac{u_{111}^2}{u^2_{11}}-2\frac{u_{t1}^2}{u_{11}u_t^2}.
\end{eqnarray*}
By the inequality \eqref{co2-1},
\begin{eqnarray*}
\sigma_2\geq \frac{5}{7}\sigma^{11}_{2}u_{11},
\end{eqnarray*}
and set $\epsilon=\frac{1}{34}$, we get
\begin{eqnarray}\label{102901}
\frac{(-u_t)(-\sum_{i\neq j}u_{ii1}u_{jj1})}{u_{11}}\geq \frac{\frac{16}{15}(-u_t)\sigma^{11}_{2}u_{111}^2}{u_{11}^2}-\frac{3}{2}\frac{u_{t1}^2}{u_{11}u_t^2}.
\end{eqnarray}

Then,
\begin{eqnarray*}
L(P)(x_0,t_0)&\geq&\frac{3\alpha }{u}
+\frac{\alpha u_t\sigma_{2}^{ii}u_{i}^2}{u^2}+\frac{\sum_{i\neq 1}(2u_{11}-\sigma_{2}^{ii})(-u_t)u^{2}_{11i}}{u_{11}^2}+
\frac{\frac{1}{15}\sigma^{11}_{2}(-u_t)u_{111}^2}{u_{11}^2}
\\&&-u_t(n-1)\sigma_1.
\end{eqnarray*}
In view of \eqref{1-diff} and the Cauchy-Schwarz inequality, we have
\begin{eqnarray*}
-(\frac{u_i}{u})^2\geq -\frac{2}{\alpha^2}\frac{u_{11i}^2}{u_{11}^2}
-\frac{2}{\alpha^2}( x_i)^2.
\end{eqnarray*}
Thus,
\begin{eqnarray*}
L(P)(x_0,t_0)&\geq&\frac{3\alpha }{u}+(\frac{1}{15}-\frac{2}{\alpha})
\frac{(-u_t)\sigma^{11}_{2}u_{111}^2}{u_{11}^2}
+\frac{\sum_{i\neq 1}(2u_{11}
-(1+\frac{2}{\alpha})\sigma_{2}^{ii})(-u_t)u^{2}_{11i}}{u_{11}^2}
\\&&-\frac{2}{\alpha} \sigma_{2}^{ii}x_{i}^2
-(n-1)u_t\sigma_1.
\end{eqnarray*}
In view of \eqref{co2-2}, if we choose $u_{11}$ bigger than some constant
$C(n, \alpha, A)$ (otherwise our lemma holds true automatically), we have
\begin{eqnarray*}
2u_{11}-(1+\frac{2}{\alpha})\sigma_{2}^{ii}&\geq& (1-\frac{2}{\alpha})u_{11}
-(1+\frac{2}{\alpha})(n-2)\bigg((n-1)^2 \sqrt{\frac{2A}{n(n-1)u_{11}m_1}}+\frac{7(n-1)}{5u_{11}m_2}\bigg)
\\&\geq& (1-\frac{2}{\alpha})u_{11}
-\frac{C(n, \alpha, A,m_1)}{\sqrt{u_{11}}}-C(n, \alpha, A,m_2)>0.
\end{eqnarray*}
Next, if we choose $\alpha$ large, we obtain at $(x_0,t_0)$
\begin{eqnarray*}
0\geq L(P)(x_0,t_0)&\geq&\frac{3\alpha }{u}
-\frac{2}{\alpha} \sigma_{2}^{ii}x_{i}^2
-(n-1)u_t\sigma_1\\&\geq& \frac{3\alpha }{u}
-\frac{(n-1)u_t}{2}\sigma_1\\&\geq& \frac{3\alpha }{u}
-\frac{(n-1)u_t}{2}u_{11}\\
&\geq&\frac{3\alpha }{u}
+\frac{(n-1)m_2}{2}u_{11}.
\end{eqnarray*}
So, we finish the proof of our Lemma.
\end{proof}

\section{The proof of Theorem \ref{main}}

We now begin to prove Theorem \ref{main}.
\begin{proof}
The proof is standard \cite{LRW} \cite{Tr}.
Let $u$ be an entire solution of the equation \eqref{SQ}. For any constant $R>1$, we consider the set
$$\Omega_{R}=\{(y,t) \in \mathbb{R}^n\times(-\infty,0]: u(Ry,R^2 t)<R^2\}.$$
Let
$$v(y,t)=\frac{u(Ry,R^2t)-R^2}{R^2}.$$
We consider the following Dirichlet problem:
\begin{equation}\label{SQ-1}
\left\{
\begin{aligned}
&(-v_t)\sigma_2(D^2v)=1, \quad  in\ \  \Omega_R; \\
&v=0, \quad  on\ \  \partial_p\Omega_R.
\end{aligned}
\right.
\end{equation}
Since $$D^2_{y} v=D^2_{x} u,$$
clearly, $v$ is a 2-convex-monotone solution of \eqref{SQ-1} and satisfies $\sigma_{3}(D^2 v)\geq -A$.
Applying Lemma \ref{le3}, so we have the estimates,
\begin{eqnarray}\label{est-}
(-v)^{\alpha}|D^2 v|\leq C(n,A,m_1,m_2,diam(\Omega_R(t)),|v|_{C^0(\Omega_R)}).
\end{eqnarray}
Now using the quadratic growth condition in Theorem \ref{main} and monotone of the solution, we have
\begin{eqnarray*}
b|R y|^2-c\leq u(R y,0)\leq u(R y,R^2t)\leq R^2,
\end{eqnarray*}
which implies
\begin{eqnarray*}
|y|^2\leq \frac{1+c}{b}.
\end{eqnarray*}
 We now
consider the domain
$$\Omega^{\prime}_{R}=\{(y,t) \in \mathbb{R}^n\times(-\infty,0]: u(Ry,R^2t)<\frac{R^2}{2}\}\subset \Omega_R.$$
In $\Omega^{\prime}_{R}$, we have
$$-c-1\leq v(y)\leq-\frac{1}{2}.$$
Then \begin{eqnarray}\label{est-}
(-v)^{\alpha}|D^2 v|\leq C(n,A,m_1,m_2,b,c).
\end{eqnarray}
Hence, \eqref{est-} implies that
$$|D^2 v|\leq 2^{\alpha} C(n,A,m_1,m_2,b,c).$$
Note that,
$$D^2_{y} v=D^2_{x} u.$$
Thus, using the previous two formulas, we have
$$|D^2 u|\leq C(n,A,m_1,m_2,\alpha,b,c), \quad \mbox{in} \quad \widetilde{\Omega}_{R}=\{(x,t)\in \mathbb{R}^n\times(-\infty,0]: u(x,t)<\frac{R^2}{2}\},$$
where $C$ is a absolutely constant. The arbitrary of $R$ implies the
above inequality holds true in all over $\mathbb{R}^n\times(-\infty,0]$. Using Evans-Krylov theory (Theorem 4.2 in \cite{NT}), we have
$$|D^2 u|_{C^{\alpha}}\leq C(n, \alpha)\frac{|D^2 u|_{C^{0}}}{R^{\alpha}}\leq \frac{C(n, \alpha)}{R^{\alpha}}.$$
$$|u_t|_{C^{\alpha}}\leq \frac{C(n, \alpha)}{R^{\alpha}}.$$
Hence, we obtain our theorem by letting $R\rightarrow +\infty$.
\end{proof}

\vspace{0.5 cm}

\vspace {1cm}

\end{document}